\newtheorem{lemma}{Lemma}[section]
\newtheorem{thm}[lemma]{Theorem}
\newtheorem{definition}[lemma]{Definition}
\newtheorem{corollary}[lemma]{Corollary}
\title{A Note on Symmetries in the Rauzy graph and Factor Frequencies}
\author{L\!'ubom\'ira Balkov\'a}
\author{Edita Pelantov\'a}
\address{Doppler Institute for Mathematical Physics and Applied Mathematics,
and Department of Mathematics, FNSPE, Czech Technical University,
Trojanova 13, 120~00 Praha~2, Czech Republic}
\email{l.balkova@centrum.cz, Edita.Pelantova@fjfi.cvut.cz}
\begin{document}

\begin{abstract}
We focus on infinite words with languages closed
under reversal. If frequencies of all factors are well defined,  we
show that the number of different frequencies of factors of
length $n+1$ does not exceed  $2\Delta C (n)+1$,
where $\Delta C (n)$  is the first difference of factor complexity
$C(n)$ of the infinite word.

\end{abstract}
\maketitle

\section{Introduction}

It is well-known that the Rauzy graph, despite of its simplicity,
has turned out to be a~powerful tool in the study of various
combinatorial properties of words. The first one to
use the idea to label edges of the Rauzy graph with frequencies was
Dekking~\cite{De} in order to show that for every length, there
exists at most three different factor frequencies in the Fibonacci
sequence. Moreover, he described for every length $n$, the set of
frequencies of factors of length $n$ and the number of factors of
length $n$ having the same frequency. Berth\'e in~\cite{Be},
observing also the evolution of Rauzy graphs for growing factor
lengths, generalized Dekking's result for all Sturmian
words.\footnote{Note that this result follows also from the $3$ gap
theorem, see \cite{So}.}

With help of the Rauzy graph,
Boshernitzan~\cite{Bo} deduced an upper bound on the number of different
frequencies in a general recurrent infinite word. He showed that the
number of frequencies of factors of length $n+1$ does not exceed
$3\Delta C(n)$, where $\Delta C (n)$ is the first difference of
factor complexity of the infinite word.

 Since $\Delta C(n)$ is known to be bounded
  for infinite words with sublinear complexity (see~\cite{Ca0}), it implies
   for fixed points of primitive substitutions and for fixed points of uniform
    substitutions (all images of letters have the same length) that the number
     of different frequencies of factors of the same length is bounded.

Boshernitzan's upper bound  $3\Delta C(n)$ can be
further diminished, if the labeled Rauzy graphs corresponding to an
infinite word have a~nontrivial group of automorphisms. This
property of the Rauzy graphs is guaranteed for example if the
language of an infinite word is closed under reversal or closed
under permutation of letters. The main aim of this paper is to prove
the following theorem:

\begin{thm}\label{UpperBoundReversalClosed} Let $u$ be an infinite word whose
language is closed under reversal and such that  the frequency
$\rho(w)$ exists for every factor $w$ of the word $u$.  Then for
every $n \in \mathbb N$, we have
\begin{equation}\label{odhad}
\# \{\rho(w)| w \in {\mathcal L}_{n+1} \}\quad \leq \quad 2\Delta
C(n)+1,
\end{equation}
 where ${\mathcal
L}_{n+1}$ denotes the set of factors of $u$ of length $n+1$.
\end{thm}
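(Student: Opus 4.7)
The plan is to study the Rauzy graph $G_n$ of $u$, whose vertices are the factors of length $n$ and whose oriented edges correspond to the factors of length $n+1$, an edge $w\to w'$ representing the length-$(n+1)$ factor with prefix $w$ and suffix $w'$. Attach to every vertex and every edge its frequency. Because all frequencies exist, Kirchhoff-type conservation holds at each vertex, so the frequency is constant along every maximal path through ``ordinary'' vertices, i.e.\ those with exactly one left and one right extension. Contracting every ordinary vertex yields a reduced graph $\tilde G_n$ whose vertices are the special factors of length $n$; a direct count of outgoing edges at each special vertex gives
\[
\#\{\rho(w)\mid w\in\mathcal L_{n+1}\}\;\leq\;\#E(\tilde G_n)\;=\;\#\{\text{special factors in }\mathcal L_n\}+\Delta C(n),
\]
and together with the trivial bound $\#\text{special}\leq 2\Delta C(n)$ this already recovers Boshernitzan's estimate.

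The closure of the language under reversal enters through the involution $R$ sending a factor to its reversal. The first task is to verify that $R$ preserves frequencies, i.e.\ $\rho(R(w))=\rho(w)$ for every factor $w$; this is the key point where closure under reversal combines with the existence of frequencies, and is handled by comparing the occurrences of $w$ and of $R(w)$ inside long prefixes of $u$. Then $R$ induces an orientation-reversing automorphism of $\tilde G_n$ exchanging left-special and right-special vertices and preserving the edge-frequency labelling, and therefore the number of distinct length-$(n+1)$ frequencies is bounded by the number of $R$-orbits on $E(\tilde G_n)$.

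Each $R$-orbit has size $1$ or $2$, so the orbit count equals $\tfrac12(\#E(\tilde G_n)+\#F)$ where $F$ collects the $R$-fixed edges, which correspond to palindromic simple paths in $G_n$, equivalently to palindromic factors of $u$ of length $\geq n+1$ whose internal length-$n$ subwords are all non-special. To close the proof I would combine the refined identity $\#E(\tilde G_n)=2\#\{\text{right-special}\}-\#\{\text{bispecial}\}+\Delta C(n)$, which uses the $R$-bijection between the left- and right-special sets, with a bound on $\#F$ derived from a palindromic-complexity inequality of the form $\mathcal P(n)+\mathcal P(n+1)\leq\Delta C(n)+2$ available for reversal-closed languages, where $\mathcal P(\cdot)$ counts the palindromic factors. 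Put together, these are designed to give $(\#E(\tilde G_n)+\#F)/2\leq 2\Delta C(n)+1$, the extra $+1$ reflecting a single unpaired ``central'' palindromic orbit.

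The main obstacle is precisely the control of $\#F$: a bispecial palindrome can support several palindromic right extensions, each producing a distinct palindromic simple path, and one must exploit that the left and right extensions of a palindrome are $R$-paired to confine the count of palindromic simple paths within the margin that the palindromic-complexity inequality allows; the $+1$ surplus over $2\Delta C(n)$ is exactly the slack one can afford.
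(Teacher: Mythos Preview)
Your plan is exactly the paper's proof: reduce the Rauzy graph, use the reversal involution $R$ on the reduced graph, bound the number of frequencies by the number of $R$-orbits on edges, and feed in the palindromic-complexity inequality $\mathcal P(n)+\mathcal P(n+1)\le \Delta C(n)+2$ together with the edge count $\#E(\tilde G_n)=\Delta C(n)+2\,\#\{\text{RS}\}-\#\{\text{BS}\}$.

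The ``main obstacle'' you anticipate in your last paragraph is not there. A simple path in $G_n$ is $R$-fixed if and only if its central element is a palindrome: the central vertex (a length-$n$ palindrome) when the path has an odd number of vertices, or the central edge (a length-$(n+1)$ palindrome) when it has an even number. Since a length-$n$ palindrome is special iff it is bispecial (reversal swaps LS and RS), each non-bispecial length-$n$ palindrome sits as an interior vertex of a unique simple path, necessarily at its center; likewise each length-$(n+1)$ palindrome is the central edge of a unique simple path. Hence one has the exact identity
\[
\#F \;=\; \mathcal P(n)+\mathcal P(n+1)-Y,
\]
where $Y$ is the number of bispecial palindromes of length $n$; no delicate extension-pairing argument is needed. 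Plugging this and $\#E(\tilde G_n)=\Delta C(n)+2Z-X$ (with $Z=\#\{\text{RS}\}$, $X=\#\{\text{BS}\}$) into $\tfrac12(\#E+\#F)$ and applying $Z\le\Delta C(n)$ and $\mathcal P(n)+\mathcal P(n+1)\le\Delta C(n)+2$ yields $2\Delta C(n)+1-\tfrac12(X+Y)\le 2\Delta C(n)+1$ directly.
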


We also deduce  that the equality  holds for all sufficiently large
$n$ if and only if $u$ is periodic. Nevertheless, a~recent result of
Ferenczi and Zamboni shows that this bound cannot be improved,
keeping its general validity, even for aperiodic words whose
languages are closed under reversal. In~\cite{FeZa}, they study the
infinite words coding $k$-interval exchange transformation with the
symmetric permutation. The authors show among others that for such
infinite words, the equality in
Theorem~\ref{UpperBoundReversalClosed} is reached infinitely many
times. (In fact, they proved a~stronger statement: the set of
indices $n$ for which the equality~\eqref{odhad} holds has density
one.)

Finally, let us mention that the idea to exploit a~symmetry of the
Rauzy graph was already used in~\cite{BalMaPe} in order to estimate
the number of palindromes of a~given length. Our article is intended
as a~further example why it is useful to study symmetries in Rauzy
graphs.

\section{Preliminaries}
An {\em alphabet} $\mathcal A$ is a~finite set of symbols, called
{\em letters}. A~concatenation of letters is a~{\em word}. {\em
Length} of a~word $w$ is the number of letters contained in $w$ and
is denoted $|w|$. The set $\mathcal A^{*}$ of all finite words
(including the empty word $\varepsilon$) provided with the operation
of concatenation is a~free monoid. We will also deal with
right-sided infinite words $u=u_0u_1u_2...$. A~finite word $w$ is
called a~{\em factor} of the word $u$ (finite or infinite) if there
exist a~finite word $w^{(1)}$ and a~word $w^{(2)}$ (finite or
infinite) such that $u=w^{(1)}ww^{(2)}$. The factor $w^{(1)}$ is
a~{\em prefix} of $u$ and $w^{(2)}$ is a~suffix of $u$. An infinite
word $u$ is said to be {\em recurrent} if each of its factors occur
infinitely many times in $u$.

{\em Language} ${\mathcal L}$ of an infinite word $u$ is the set of
all factors of $u$.  We denote by ${\mathcal L}_n$ the set of
factors of length $n$ of the infinite word $u$. Then, we can define
{\em complexity function} (or {\em complexity}) $C: \mathbb N
\rightarrow \mathbb N$ which associates to every $n$ the number of
different factors of length $n$ of the infinite word $u$, i.e.
$C(n)=\# {\mathcal L}_n.$

An important role for determining the factor complexity is played by
special factors. We say that a~letter $a$ is {\em right extension}
of a~factor $w \in {\mathcal L}$ if $wa$ is also a~factor of $u$. We
denote by $Rext(w)$ the set of all right extensions of $w$ in $u$,
i.e. $Rext(w)=\{a \in {\mathcal A} \bigm | wa \in {\mathcal L}\}$.
If $\#Rext(w)\geq 2$, then the factor $w$ is called {\em right
special} (RS for short). Analogously, we define {\em left
extensions, $Lext(w)$, left special factor} (LS for short).
Moreover, we say that a~factor $w$ is {\em bispecial} (BS for short)
if $w$ is LS and RS.

With this in hand, we can introduce a~formula
for the {\em first difference of complexity} $\Delta
C(n)=C(n+1)-C(n)$ (taken from~\cite{Ca1}).
\begin{equation} \label{complexity}
\Delta C(n)=\sum_{w \in  {\mathcal L}_n}\bigl (\# Rext(w)-1 \bigr
)=\sum_{w \in  {\mathcal L}_n}\bigl (\# Lext(w)-1 \bigr ), \quad n
\in \mathbb N.
\end{equation}

A~language ${\mathcal L}$ is closed under reversal, if for every
factor $w=w_1\dots w_n\in \mathcal A^{*}$ also its mirror image
$\overline{w}=w_n\dots w_1$  belongs to ${\mathcal L}$. A factor $w$
which coincides with its mirror image $\overline{w}$ is called {\em
palindrome}.

If we  denote by ${\mathcal Pal}_n$ the set of palindromes of length
$n$ contained in $u$, then we can define
 {\em palindromic complexity} $P: \mathbb N
\rightarrow \mathbb N$ of the infinite word $u$ by the prescription
$P(n)=\# {\mathcal Pal}_n$. Clearly, $P(n)\leq C(n)$ for any
positive integer $n$.
 A non-trivial inequality between  $P(n)$ and $C(n)$  can be
found in  \cite{AlBaCaDa}. Here we shall use the result from
\cite{BalMaPe}: if the language of  an infinite recurrent word is
closed under reversal, then
 \begin{equation}\label{odhadPal}
P(n)+P(n+1)  \ \leq \ \Delta C(n) +2.
 \end{equation}
In this paper, we focus on infinite words with well defined factor
frequencies. More precisely, we will assume that for
any factor $w$ of an infinite word $u$, the following limit exists
$$\lim_{|v| \to \infty, v \in {\mathcal L}}
\frac {\# \{ \mbox{occurrences of $w$ in $v$} \} }{|v|}.$$ This
limit will be denoted by $\rho(w)$ and called {\em frequency} of the
factor $w$. Let us add that an {\em occurrence} of $w$ in
$v=v_1v_2\ldots v_m$ is an index $i \leq m$ such that $w$ is
a~prefix of the word $v_iv_{i+1} \ldots v_m$.

To dispose of all definitions needed for the
deduction of an improved upper bound on the number of different
frequencies, it remains to define the labeled Rauzy graph.

{\em Labeled Rauzy graph} of order $n$ of an infinite word $u$ is
a~directed graph  $\Gamma_n$ whose set of vertices is ${\mathcal
L}_n$ and  set of edges is ${\mathcal L}_{n+1}$. Any edge $e = w_0
w_1 \dots w_n$ starts in the vertex $w=w_0w_1\dots w_{n-1}$, ends in
the vertex  $v=w_1\dots w_{n-1}w_n$, and is labeled by its factor
frequency $\rho(e)$.

\section{Reduced Rauzy graphs}

Edge frequencies in a~Rauzy graph $\Gamma_n$ behave similarly as
current in a~circuit.We may formulate an analogy of Kirchhoff's law:
the sum of frequencies of edges ending in a~vertex equals the sum of
frequencies of edges starting in this vertex. As a~direct
consequence, if a~Rauzy graph contains a~vertex with only one
incoming and one outgoing edge, then the frequency of these edges is
the same, say $\rho$. Therefore, we can replace this triple
(edge-vertex-edge) with only one edge keeping the frequency $\rho$.
If we reduce the Rauzy graph step by step applying the above
described procedure, we obtain the so-called {\em reduced Rauzy
graph} $\tilde{\Gamma}_n$, which simplifies the investigation of
edge frequencies. In order to precise this consideration, we
introduce the following notion.
 \begin{definition}\label{simple_path}
  Let
$\Gamma_n$ be the labeled Rauzy graph of order $n$
of an infinite word $u$. A~directed path $w^{(0)}w^{(1)}\dots
w^{(m)}$ of non-zero length in $\Gamma_n$ such that its initial
vertex $w^{(0)}$ and its final vertex $w^{(m)}$ are LS
or RS, and the other vertices are neither LS nor RS factors is
called simple. We define label of the simple path as the label of
any edge of this path.
 \end{definition}

 \begin{definition}\label{reduced_Rauzy_graph}
 Reduced Rauzy graph
$\tilde{\Gamma}_n$ of $u$ (of order $n$) is a~directed graph whose
set of vertices is formed by LS and RS factors of ${\mathcal L}_n$
and whose set of edges is given in the following way. Vertices $w$
and $v$ are connected with an edge $e$ if there exists in $\Gamma_n$
a~simple path starting in $w$ and ending in $v$. We assign to such
an edge $e$ the label of the corresponding simple path.
  \end{definition}
For a~recurrent word $u$, at least one edge starts and at least one
edge ends in every vertex of $\Gamma_n$. Therefore, no edge label is
lost by the reduction of $\Gamma_n$. The number of different edge
labels in the reduced Rauzy graph $\tilde{\Gamma}_n$ is clearly less
or equal to the number of edges in $\tilde{\Gamma}_n$. Let us thus
calculate the number of edges in $\tilde{\Gamma}_n$ in order to get
an upper bound on the number of frequencies of factors in $
{\mathcal L}_{n+1}$.

For every RS factor $w \in {\mathcal L}_n$, it holds that
$\#Rext(w)$ edges begin in $w$, and for every LS factor $v \in
{\mathcal L}_n$ which is not RS, only one edge begins in $v$, thus
we get the following relation
\begin{equation} \label{frequencies_vertices} \# \{ e
 | \ e \ \mbox{edge in} \ \tilde {\Gamma}_n\}=\sum_{w \ \text{RS in ${\mathcal L}_n$}} \# Rext(w)+\sum_{v \
\text{LS} \ \text{not RS in ${\mathcal L}_n$}}1. \end{equation}
Using Equation~(\ref{complexity}), we deduce that
\begin{equation}\label{edge_vertex} \# \{ e | \ e \ \mbox{edge in} \ \tilde {\Gamma}_n\}=\Delta C(n)+
\sum_{v \ \text{RS in ${\mathcal L}_n$} }1+\sum_{v \ \text{LS} \
\text{not RS in ${\mathcal L}_n$}}1.
\end{equation}
 Since $\# Rext(w) - 1 \geq 1$
for any RS factor $w$ and, similarly, for LS factors, we have
\begin{equation}\label{odhadRS}
\#\{w\in {\mathcal L}_n | \ w\ RS\} \ \leq  \ \ \Delta C(n) \quad
{\rm and} \quad \#\{w\in {\mathcal L}_n | \  w\ LS\} \ \leq  \ \
\Delta C(n)
\end{equation}
 The following result initially proved by Boshernitzan in~\cite{Bo}
 follows immediately by combining \eqref{edge_vertex} and \eqref{odhadRS}.
\begin{thm}\label{Boshernitzan} Let $u$ be an infinite recurrent word
such that for every factor $w \in {\mathcal L}$, the frequency
$\rho(w)$ exists. Then for every $n \in \mathbb N$, it holds
\[\#\{\rho(e) \bigm | e \in {\mathcal L}_{n+1}\}\quad \leq \quad 3\Delta
C(n).\] \end{thm}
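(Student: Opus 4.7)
The plan is to show that the set of frequencies appearing on edges of the original Rauzy graph $\Gamma_n$ is no larger than the number of edges of the reduced Rauzy graph $\tilde{\Gamma}_n$, and then to bound this edge count using the identity~\eqref{edge_vertex} together with the inequalities~\eqref{odhadRS}.

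The first step is justifying that $\#\{\rho(e)\mid e\in\mathcal L_{n+1}\}\leq \#\{\text{edges of }\tilde\Gamma_n\}$. Every edge of $\Gamma_n$ sits on exactly one simple path, and by Kirchhoff's law (applied at each intermediate vertex of the path, which by Definition~\ref{simple_path} has a unique incoming and a unique outgoing edge) all edges of a simple path share a common frequency. Hence collapsing each simple path into a single edge of $\tilde\Gamma_n$ respects frequency labels and loses no value. Here recurrence of $u$ enters: it guarantees that every vertex of $\Gamma_n$ has both positive in-degree and positive out-degree, so no edge is destroyed or left dangling by the reduction procedure.

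The second step is to estimate the number of edges of $\tilde\Gamma_n$ from~\eqref{edge_vertex},
\[
\#\{e \text{ edge in }\tilde\Gamma_n\} \;=\; \Delta C(n) \;+\; \#\{v\in\mathcal L_n\mid v\ \mathrm{RS}\} \;+\; \#\{v\in\mathcal L_n\mid v\ \mathrm{LS,\ not\ RS}\}.
\]
Since $\#\{v\in\mathcal L_n\mid v\ \mathrm{LS,\ not\ RS}\}\leq \#\{v\in\mathcal L_n\mid v\ \mathrm{LS}\}$, both of the two rightmost summands are bounded by $\Delta C(n)$ thanks to~\eqref{odhadRS}. Combining, the number of edges of $\tilde\Gamma_n$ is at most $\Delta C(n)+2\Delta C(n)=3\Delta C(n)$, which by the first step is also an upper bound on the number of distinct frequencies in $\mathcal L_{n+1}$.

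There is really no substantial obstacle here, as the proof is declared to follow immediately from previously established identities. The only delicate point worth emphasising is the verification that each simple path carries a single well-defined frequency; once this Kirchhoff-type observation is in place, the bound is a one-line arithmetic consequence of~\eqref{edge_vertex} and~\eqref{odhadRS}.
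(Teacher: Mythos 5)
Your proof is correct and follows exactly the paper's route: the authors likewise observe (via the Kirchhoff-type argument and recurrence) that no edge label is lost in passing to $\tilde\Gamma_n$, and then state that the theorem ``follows immediately by combining \eqref{edge_vertex} and \eqref{odhadRS}'' --- precisely the arithmetic you carry out. Your write-up merely makes explicit the steps the paper leaves to the reader, so there is nothing to add.
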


\section{Proof of the Theorem  \ref{UpperBoundReversalClosed}}
Let us focus in the sequel on infinite words $u$ whose languages are
closed under reversal and such that the frequency of every factor
exists.
\begin{enumerate}
\item Such words are necessarily recurrent.
\item For any pair of factors $w, v \in {\mathcal L}$, it holds
$$\frac {\# \{ \mbox{occurrences of $w$ in $v$} \} }{|v|}=\frac
{\# \{ \mbox{occurrences of $\overline{w}$ in $\overline{v}$} \}
}{|\overline{v}|}.$$ Consequently, $\rho(w)=\rho(\overline{w})$ for
all factors $w$ of $u$.
\end{enumerate}
With the above two ingredients in hand, we will be
able to prove an essential lemma. Proof of
Theorem~\ref{UpperBoundReversalClosed} will be then a~direct
consequence of this lemma.
\begin{lemma}\label{technical_frequency} Let $u$ be an infinite word
whose language ${\mathcal L}$ is closed under reversal and such that
for each factor $w \in {\mathcal L}$, the frequency $\rho(w)$
exists. Then for every $n \in \mathbb N$, we have \[\# \{\rho(e)| e
\in {\mathcal L}_{n+1} \}\quad \leq \quad \frac{1}{2}\
\Bigl(P(n)+P(n+1)+\Delta C(n)- X-Y\Bigr)+Z,\]
\begin{tabular}{ll}
where & $X$ is the number of BS factors of length $n$, \\ & $Y$ is
the number of BS palindromic factors of length $n$, \\ & $Z$ is the
number of RS factors of length $n$.
\end{tabular}
 \end{lemma}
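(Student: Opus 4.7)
The plan is to exploit the reversal symmetry of the reduced Rauzy graph $\tilde{\Gamma}_n$. By the two observations preceding the lemma, the map $w\mapsto \overline{w}$ extends to a label-preserving involution on $\tilde{\Gamma}_n$: it swaps LS and RS vertices, and it sends the simple path $w^{(0)}w^{(1)}\dots w^{(m)}$ encoding a factor $W$ of length $n+m$ to the simple path $\overline{w^{(m)}}\dots \overline{w^{(0)}}$ encoding $\overline{W}$, a factor of the same frequency. Edges in the same orbit thus carry the same label, so
$$\#\{\rho(e)\mid e\in \mathcal L_{n+1}\}\ \leq\ \frac{N+F}{2},$$
where $N$ is the number of edges of $\tilde{\Gamma}_n$ and $F$ the number of fixed edges. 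From \eqref{edge_vertex}, combined with $\#\{LS\} = \#\{RS\} = Z$ (forced by closure under reversal) and $\#\{BS\} = X$, one computes $N=\Delta C(n)+2Z-X$.

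It remains to bound $F$. An edge of $\tilde{\Gamma}_n$ is fixed exactly when its underlying factor $W$ is a palindrome, so to each fixed edge I would assign its \emph{central palindrome}: the central sub-factor of $W$, of length $n$ if $|W|-n$ is even and of length $n+1$ otherwise. The crux of the argument, and what I expect to be the main obstacle, is the injectivity of this assignment. If two distinct palindromic fixed edges $W_1, W_2$ with $|W_1|<|W_2|$ shared the same center, then $W_1$ would sit as a central sub-factor of $W_2$; hence the length-$n$ endpoints of $W_1$, which are LS or RS by definition of a simple path, would occur as strictly internal vertices of the simple path encoding $W_2$, contradicting that internal vertices of a simple path are neither LS nor RS.

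A variant of this argument rules out the $Y$ BS palindromes of length $n$ as possible centers: for such a $q$, any fixed edge centered at $q$ has length at least $n+2$, which places $q$ strictly inside the associated simple path and forces $q$ to be neither LS nor RS, incompatible with being bispecial. Therefore $F\leq (P(n)-Y)+P(n+1)$, and substituting this together with $N=\Delta C(n)+2Z-X$ into $\tfrac{1}{2}(N+F)$ yields exactly $\tfrac{1}{2}\bigl(P(n)+P(n+1)+\Delta C(n)-X-Y\bigr)+Z$, as claimed.
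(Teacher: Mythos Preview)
Your proposal is correct and follows essentially the same route as the paper: the reversal involution on $\tilde{\Gamma}_n$, the orbit count $\tfrac{1}{2}(N+F)$, the edge count $N=\Delta C(n)+2Z-X$ via \eqref{edge_vertex}, and the identification of fixed edges with their central palindromes of length $n$ or $n+1$, excluding the $Y$ bispecial ones. The only cosmetic difference is that the paper asserts the \emph{equality} $F=P(n)+P(n+1)-Y$ (using that every non-BS palindrome of length $n$ and every palindrome of length $n+1$ actually occurs as the centre of some fixed simple path), whereas you prove only the inequality $F\leq P(n)+P(n+1)-Y$; since only the inequality is needed for the lemma, and your injectivity argument makes explicit what the paper leaves implicit, this is a harmless variation.
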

\begin{proof} Let $\Gamma_n$ be the labeled Rauzy graph of $u$ of order $n$.
Let us define a~mapping $\mu$ which to every vertex $w \in {\mathcal
L}_n$ associates the vertex $\overline{w}$, to every edge $e \in
{\mathcal L}_{n+1}$ associates the edge $\overline{e}$. Then,
${\mu}^{2}=Id$, and, thanks to the closeness of ${\mathcal L}$ under
reversal, $\mu$ maps $\Gamma_n$ onto itself, in fact, $\mu$ is an
automorphism of $\Gamma_n$. Clearly, every simple path
$w^{(0)}w^{(1)} \dots w^{(m)}$ in $\Gamma_n$ is mapped by $\mu$ to
the simple path $\overline{w^{(m)}}\dots \overline{w^{(1)}} \
\overline{w^{(0)}}$. This implies that $\mu$ induces an automorphism
on the reduced Rauzy graph $\tilde{\Gamma}_n$, too.

We know already that the set of edge labels of $\tilde{\Gamma}_n$ is
equal to the set of edge labels of $\Gamma_n$. Let us denote by $A$
the number of edges $e$ in $\tilde{\Gamma}_n$ (the number of simple
paths in $\Gamma_n$) such that $e$ is mapped by $\mu$ onto itself
and by $B$ the number of edges $e$ in $\tilde{\Gamma}_n$ such that
$e$ is not mapped by $\mu$ onto itself, then clearly, \[ \# \{ e | \
e \ \mbox{edge in} \ \tilde {\Gamma}_n\}=A+B.\] If $e$ is mapped by
$\mu$ onto itself, then the corresponding simple path satisfies
$$w^{(0)}w^{(1)}\dots w^{(m)}=\overline{w^{(m)}}\dots
\overline{w^{(1)}} \ \overline{w^{(0)}},$$ hence, for $m$ even, its
central vertex $w^{(\frac{m}{2})}$ is a~palindrome, and for $m$ odd,
its central edge going from $w^{(\frac{m-1}{2})}$ to
$w^{(\frac{m+1}{2})}$ is a~palindrome. On the other hand, every
palindrome of length $n+1$ is the central factor of a~simple path
mapped by $\mu$ onto itself and every palindrome of length $n$ is
either the central vertex of a~simple path mapped by $\mu$ onto
itself or is BS. Therefore,
\begin{equation}\label{A} A=P(n)+P(n+1)-\#\{w \in {\mathcal L}_n | w
\ \mbox{BS in ${\mathcal Pal}_n$} \}. \end{equation} We subtract the
number of palindromic BS factors of ${\mathcal L}_n$, in the
statement denoted by $Y$, since they are not inner vertices of any
simple path.

Now, let us turn our attention to edges of $\tilde{\Gamma}_n$ which
are not mapped by $\mu$ onto themselves. For every such edge $e$, at
least one another edge, namely $\mu(e)$, has the same label
$\rho(e)$. These considerations lead to the following estimate
\begin{equation}\label{A_AB} \# \{  \rho(e) | \ e \in {\mathcal L}_{n+1}\} \leq
A+\tfrac{1}{2}B=\tfrac{1}{2}A+\tfrac{1}{2}(A+B). \end{equation}
Rewriting Equation~(\ref{edge_vertex}), we obtain
\[A+B=\Delta C(n)+2Z-X.\] This fact together with~(\ref{A}) and \eqref{A_AB}
proves the statement. \end{proof} If we apply on $P(n)+P(n+1)$ and
$Z$ from Lemma~\ref{technical_frequency} the
estimates~\eqref{odhadPal} and~\eqref{odhadRS}, respectively, we
obtain immediately Proof of Theorem~\ref{UpperBoundReversalClosed}.
In fact, we get even a~finer upper bound
\begin{equation}\label{BetterUpBound} \# \{\rho(e)| e \in {\mathcal
L}_{n+1} \}\quad \leq \quad 2\Delta C(n)+1-
\tfrac{1}{2}X-\tfrac{1}{2}Y,
\end{equation}
where $X$ is the number of BS factors of length $n$ and $Y$ is
the number of BS palindromic factors of length $n$.

Let us study for which infinite words, the equality in
Theorem~\ref{UpperBoundReversalClosed} is attained. Infinite words
whose languages are closed under reversal are either purely periodic
or aperiodic.
\begin{itemize}
\item In case of purely periodic words, for sufficiently large $n$,
the first difference of complexity $\Delta C(n)=0$ and all factors
of length $n$ have the same frequency.
\item
On the other hand, aperiodic words contain infinitely many BS
factors. Hence, according to~\eqref{BetterUpBound}, the inequality
in Theorem~\ref{UpperBoundReversalClosed} is strict for infinitely
many $n$.
\end{itemize}
This reasoning leads to the following corollary.
\begin{corollary}
 Let $u$ be an infinite word
whose language ${\mathcal L}$ is closed under reversal and such
that for each factor $w \in {\mathcal L}$, the frequency
$\rho(w)$ exists. Then, the equality $$\# \{\rho(e)| e \in {\mathcal
L}_{n+1} \}\quad = \quad 2\Delta C(n)+1$$ holds for all sufficiently
large $n$ if and only if $u$ is periodic.
\end{corollary}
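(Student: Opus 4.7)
The plan is to read the corollary directly off the sharper bound \eqref{BetterUpBound} established in the course of proving Theorem~\ref{UpperBoundReversalClosed}. Since that bound reads $\# \{\rho(e)\mid e \in {\mathcal L}_{n+1}\} \leq 2\Delta C(n)+1 - \tfrac{1}{2}X - \tfrac{1}{2}Y$, with $X$ the number of BS factors and $Y$ the number of palindromic BS factors of length $n$, any $n$ at which equality holds in \eqref{odhad} is forced to satisfy $X=0$. Hence the corollary reduces to the equivalence: no BS factor of length $n$ exists for all sufficiently large $n$ if and only if $u$ is periodic.

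For the \emph{if} direction, I would observe that a purely periodic word of minimal period $p$ has complexity eventually constant equal to $p$, so $\Delta C(n)=0$ for large $n$; moreover, each of the $p$ factors of length $n$ occurs at exactly one residue class modulo $p$, hence with frequency $1/p$, so both sides of \eqref{odhad} collapse to $1$. For the \emph{only if} direction I argue by contrapositive: assume $u$ aperiodic. The language is closed under reversal and every factor has a well-defined frequency, which forces $u$ to be recurrent, and an aperiodic recurrent word is classically known to admit BS factors of arbitrarily large length. Thus $X\geq 1$ for infinitely many $n$, and \eqref{BetterUpBound} yields a strict inequality at each such $n$, so equality cannot hold throughout a cofinal set of lengths.

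The only nontrivial ingredient is the existence of BS factors of arbitrarily large length in an aperiodic recurrent word; this is folklore but deserves either a reference or a brief self-contained argument. One direct route: if the longest BS factor had length $N$, then for every $n>N$ the sets of RS and LS factors of length $n$ would be disjoint, and combining this with the two expressions for $\Delta C(n)$ in \eqref{complexity} and the Morse--Hedlund bound $\Delta C(n)\geq 1$ would force a rigidity on $C$ incompatible with aperiodicity. Apart from this standard fact, the corollary is a direct unpacking of \eqref{BetterUpBound} together with the trivial frequency behaviour of periodic words, so no further machinery is needed.
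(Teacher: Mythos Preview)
Your proposal is correct and follows essentially the same line as the paper: both directions are read off the refined bound \eqref{BetterUpBound}, using that purely periodic words have $\Delta C(n)=0$ and a single frequency for large $n$, while aperiodic words possess infinitely many BS factors so that $X\geq 1$ infinitely often. The paper is in fact terser than you are, simply asserting the dichotomy ``purely periodic or aperiodic'' for reversal-closed languages and the existence of infinitely many BS factors in the aperiodic case; your remark that the latter deserves a reference is well taken, though your sketched argument via disjointness of RS and LS sets would need more work to be made precise (the cleanest route is via strong connectedness of the Rauzy graph for recurrent words).
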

\section{Comments}
\begin{enumerate}
\item Berth\'e in~\cite{Be} has shown that for every Sturmian word, the number of frequencies of factors of length $n$ equals 2 if ${\mathcal L}_n$ contains a~BS factor, and is equal to 3 otherwise.
Since any BS factor of a~Sturmian word is a~palindrome, the finer
upper bound in~\eqref{BetterUpBound}  is reached for all $n \in
\mathbb N$.
\item Ferenczi and Zamboni~\cite{FeZa} have proved that infinite words coding $k$-interval exchange transformation whose language is closed under reversal attain the upper bound in~\eqref{BetterUpBound} for all $n \in \mathbb N$. As Sturmian words are infinite words coding 2-interval exchange transformation, Item (1) is a~particular case of their result.
\item Another example of infinite words for which the upper bound in~Theorem~\ref{UpperBoundReversalClosed} is reached infinitely many times are fixed points of the following substitution $\varphi$ on $\{0,1\}$:
$$\varphi(0)=0^{a}1, \quad \varphi(1)=0^b1, \quad a>b \geq 1.$$
The substitution $\varphi$ is a~canonical substitution associated with quadratic non-simple Parry numbers (for the precise definition see~\cite{Fa}).
\item \label{3poloviny} There exist infinite words having languages closed under reversal, however, containing only a~finite number of palindromes. For an example see~\cite{BeBoCaFa}. For such words, Lemma~\ref{technical_frequency} provides even a~better estimate
$$ \# \{\rho(e)| e \in {\mathcal L}_{n+1} \}\quad
\leq \quad \tfrac{3}{2}\Delta C(n).$$
\item \label{XX} The essential
idea of our approach relies in the fact that the closeness of the
language under reversal implies existence of a~non-triavial
automorphism of the labeled Rauzy graph. More generally, our method
can be applied on any infinite word whose language $\mathcal L$
possesses a~symmetry $T: {\mathcal L} \rightarrow {\mathcal L}$ with
the following properties:
\begin{enumerate}
\item $T$ is a~bijective map,
\item for every $w,v \in {\mathcal L}$,
 $$\#\{\text{occurrences of $w$ in $v$}\}=\#\{\text{occurrences of $T(w)$ in $T(v)$}\}.$$
\end{enumerate}
Clearly, the mirror image map $w \to \overline{w}$ satisfies both
assumptions. A further example can be obtained if we choose
a~permutation $\pi$ of letters and define $T_{\pi}(w_1w_2\dots
w_n)=\pi(w_1)\pi(w_2)\dots \pi(w_n)$ for each factor $w_1w_2\dots
w_n$. It may be shown that the group of all such symmetries $T$ is
generated by the mirror image map and the mappings $T_{\pi}$.
\item If the language of a~binary word is closed under exchange $\pi$ of
letters (such words are called complementation-symmetric), no simple
path is mapped by $\pi$ on itself and, thus, each frequency is
assigned to at least two edges in a~reduced Rauzy graph
$\tilde{\Gamma}_n$. As the number of edges is at most $3\Delta
C(n)$, we obtain for frequencies the same upper bound as in
Item~\eqref{3poloviny}.
\item The
Thue-Morse sequence has in the sense of Item~\eqref{XX} the most
symmetrical language among binary words. It explains why the upper
bound from Theorem~\ref{UpperBoundReversalClosed} overestimates the
actual number of factor frequencies. For concrete values of factor
frequencies consult Frid~\cite{Fr}.
\end{enumerate}
\section{Acknowledgment}
The authors acknowledge the financial support of Czech Science
Foundation GA\v{C}R 201/05/0169 and the Ministry of Education of
the Czech Republic LC06002.


\end{document}